\definecolor{dmagenta}{rgb}{.4,.1,.5}
\definecolor{007}{rgb}{.0,.0,.7}
\definecolor{dred}{rgb}{.5,.0,.0}
\definecolor{dgreen}{rgb}{.0,.5,.0}
\definecolor{dblue}{rgb}{.0,.0,.5}
\definecolor{violet}{rgb}{.3,.0,.9}
\definecolor{orange}{cmyk}{0,.5,.1,.0}
\definecolor{dcyan}{cmyk}{.5,.0,.0,.0}
\definecolor{dyellow}{cmyk}{.0,.0,.5,.0}
\definecolor{cm}{cmyk}{1,.0,.0,.0}
\numberwithin{equation}{section}
\newtheorem{theorem}{Theorem}[section]
\newtheorem{lemma}{Lemma}[section]
\theoremstyle{definition}
\theoremstyle{remark}
\newcommand{\bvnorm}[1]{[\kern-0.45ex[\kern0.1ex #1 \kern0.1ex]\kern-0.45ex]}
\begin{document}

\title[]
{Existence and uniqueness of solutions to non-Abelian multiple vortex equations on graphs} 

\author[ Y. Hu ]{Yuanyang Hu$^1$ }

\thanks{$^1$ School of Mathematics and Statistics,
	Henan University, Kaifeng, Henan 475004, P. R. China.}
\email{}
\thanks{{\bf Emails:}  {\sf yuanyhu@mail.ustc.edu.cn} (Y. Hu).}

\date{}

\begin{abstract}
Let $G=(V,E)$ be a connected finite graph.
We study a system of non-Abelian multiple vortex equations on $G$. We establish a necessary and sufficient condition for the existence and uniqueness of solutions to the non-Abelian multiple vortex equations.
\end{abstract}

\maketitle

\textit{ \footnotesize Mathematics Subject Classification (2010)}  {\scriptsize 35A01, 35R02}.

\textit{ \footnotesize Key words:} {\scriptsize variational method, vortex, finite graph, equation on graphs}

\section{Introduction}
Vortices play important roles in many areas of theoretical physics including condensed-matter physics, cosmology, superconductivity theory, optics, electroweak theory, and quantum Hall effect. In the past two decades, the topological, non-topological and doubly periodic multivortices to self-dual Chern-Simons model, Chern-Simons Higgs model, the generalized self-dual Chern-Simons model, Abelian Higgs model, the generalized Abelian Higgs model and non-Abelian Chern–
Simons model were established; see, for example, \cite{CI, Ha, LPY, NT, T, TY, Y} and the references therein. Wang and Yang \cite{WY} studied Bogomol’nyi system arising in the abelian Higgs theory defined on a rectangular domain and subject
to a ’t Hooft type periodic boundary condition and established a sufficient and necessary condition for the existence of multivortex solutions of the Bogomol'nyi system. Caffarelli and Yang \cite{CY} established the existence of periodic multivortices in the Chern-Simons Higgs Model. In particular, Lin and Yang \cite{LY} investigated a system of non-Ablian multiple vortex equations governing coupled $SU(N)$ and $U(1)$ gauge and Higgs fields which may be embedded in a supersymmetric field theory framework.

In recent years, equations on graphs have attracted extensive attention; see, for example, \cite{ ALY, Bdd, GJ, GC, Hu, HWY, HLY, LCT, LP, WN} and the references therein. Ge, Hua and Jiang \cite{GHJ} proved that there exists a uniform lower bound for the energy, $\sum\limits_{G} e^{u}$ of any solution $u$ to the equation $\Delta u+e^{u}=0$ on graphs.  Huang, Wang and Yang \cite{HWY} studied the Mean field equation and the relativistic Abelian Chern-Simons equations (involving two  Higgs particles and any two gauge fields) on any finite connected graphs and eatablished some existence results. Huang, Lin and Yau \cite{HLY} proved the existence of solutions to the following mean field equations 
$$
\Delta u+e^{u}=\rho \delta_{0}
$$
and
$$
\Delta u=\lambda e^{u}\left(e^{u}-1\right)+4 \pi \sum_{j=1}^{M} \delta_{p_{j}}
$$
on graphs.
 
 Let $G=(V,E)$ be a connected finite graph, $V$ denote the vetex set and $E$ denote the edge set.
 
 Inspired by the work of Huang-Lin-Yau \cite{HLY}, we investigate a system of non-Abelian multiple vortex equations  
\begin{equation}\label{1}
	\begin{aligned}
		&\Delta u_{1}=-N m_{e}^{2}+m_{e}^{2}\left(\mathrm{e}^{\frac{u_{1}}{N}+\frac{(N-1)}{N} u_{2}}+[N-1] \mathrm{e}^{\frac{u_{1}}{N}-\frac{u_{2}}{N}}\right)+4 \pi \sum_{j=1}^{n} \delta_{p_{j}}(x), \\
		&\Delta u_{2}=m_{g}^{2}\left(\mathrm{e}^{\frac{u_{1}}{N}+\frac{(N-1)}{N} u_{2}}-\mathrm{e}^{\frac{u_{1}}{N}-\frac{u_{2}}{N}}\right)+4 \pi \sum_{j=1}^{n} \delta_{p_{j}}(x)
	\end{aligned}
\end{equation} on $G$, where $n$, $N$ are positive integers, $m_e$, $m_g$ are constants and $\delta_{p_{j}}$ is the dirac mass at vetex $p_j$.

 Let $\mu: V \to (0,+\infty)$ be a finite measure, and $|V|$=$ \text{Vol}(V)=\sum \limits_{x \in V} \mu(x)$ be the volume of $V$.

 We state our main result as follows.
\begin{theorem}\label{t1}
Equations \eqref{1} admits a unique solution if and only if 	\begin{equation}\label{}
	|V|>\frac{4 \pi n}{N m_{e}^{2}}+\frac{4 \pi n(N-1)}{N m_{g}^{2}}.
\end{equation}
\end{theorem}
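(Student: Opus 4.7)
I would integrate both equations of \eqref{1} against the vertex measure $\mu$. Since $\int_V \Delta u\,\D\mu = 0$ on any finite graph, the two integrations give a $2\times 2$ linear system for $A := \int_V \E^{u_1/N + (N-1)u_2/N}\,\D\mu$ and $B := \int_V \E^{u_1/N - u_2/N}\,\D\mu$, whose unique solution is
\[
A = |V| - \frac{4\pi n}{N m_e^2} - \frac{4\pi n(N-1)}{N m_g^2}, \qquad B = A + \frac{4\pi n}{m_g^2}.
\]
The required positivity $A>0$ is exactly the stated hypothesis.

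\textbf{Reduction and energy functional.} For sufficiency, I would pass to a non-singular system. Let $u_0$ be the mean-zero solution of $\Delta u_0 = -\frac{4\pi n}{|V|} + 4\pi \sum_{j=1}^n \delta_{p_j}$, whose existence on a finite connected graph is standard. Substituting $u_1 = u_0 + v_1$ and $u_2 = u_0 + v_2$ produces a key cancellation: $\E^{u_1/N - u_2/N}$ reduces to $\E^{(v_1 - v_2)/N}$ with no $u_0$, while the other exponential becomes $\E^{u_0 + (v_1 + (N-1)v_2)/N}$. The resulting coupled system for $(v_1, v_2)$ is Dirac-free. I would then introduce the action
\begin{align*}
J(v_1, v_2) &= \frac{1}{2m_e^2}\int_V |\nabla v_1|^2\,\D\mu + \frac{N-1}{2m_g^2}\int_V |\nabla v_2|^2\,\D\mu + N\int_V \E^{u_0 + (v_1+(N-1)v_2)/N}\,\D\mu \\
&\quad + N(N-1)\int_V \E^{(v_1-v_2)/N}\,\D\mu + \Bigl(\frac{4\pi n}{m_e^2 |V|} - N\Bigr)\!\int_V v_1\,\D\mu + \frac{4\pi n(N-1)}{m_g^2 |V|}\!\int_V v_2\,\D\mu,
\end{align*}
and verify by direct computation that the Euler--Lagrange equations of $J$ coincide with the reduced system.

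\textbf{Convexity and coercivity.} Strict convexity follows from three observations: the Dirichlet terms are convex, each exponential is convex as the exponential of a linear form, and the linear forms $(1,N-1)/N$ and $(1,-1)/N$ are linearly independent (for $N\geq 2$), forcing the pointwise Hessian to be positive definite. For coercivity, decompose $v_i = \bar v_i + v_i'$ with $\int_V v_i'\,\D\mu = 0$, and set $s = (\bar v_1 + (N-1)\bar v_2)/N$, $t = (\bar v_1 - \bar v_2)/N$. Jensen's inequality (together with $\int u_0\,\D\mu = 0$) gives
\[
\int_V \E^{u_0 + (v_1+(N-1)v_2)/N}\,\D\mu \geq |V|\E^s, \qquad \int_V \E^{(v_1-v_2)/N}\,\D\mu \geq |V|\E^t.
\]
Rewriting the linear part of $J$ in the $(s,t)$ coordinates yields $-sK_1 - (N-1)K_2\,t$, where $K_1 = N|V| - \frac{4\pi n}{m_e^2} - \frac{4\pi n(N-1)}{m_g^2}$ and $K_2 = K_1 + \frac{4\pi n N}{m_g^2}$; the hypothesis is precisely $K_1>0$, which in turn forces $K_2>0$. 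The one-variable functions $N|V|\E^s - K_1 s$ and $N(N-1)|V|\E^t - (N-1)K_2 t$ are each coercive on $\R$, while the Dirichlet terms combined with the Poincar\'e inequality on the finite graph control the primed parts $v_i'$. A routine case analysis then produces $J(v_1,v_2) \to +\infty$ along every unbounded sequence in $W^{1,2}(V)\times W^{1,2}(V)$.

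\textbf{Conclusion and main obstacle.} Since $J$ is continuous, strictly convex, and coercive on the finite-dimensional space $W^{1,2}(V)\times W^{1,2}(V)$, the direct method produces a unique minimizer, which via $u_i = u_0 + v_i$ yields the unique solution of \eqref{1}. The delicate point is the coercivity step: one must track $J$ along every unbounded direction in the decomposition $(s,t,v_1',v_2')$ and verify that the hypothesized bound enters precisely as the positivity $K_1>0$, which is what prevents $J$ from drifting to $-\infty$ in the coordinated direction where the averages $\bar v_1,\bar v_2$ conspire to send $s\to -\infty$.
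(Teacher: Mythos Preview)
Your proof is correct, and it takes a genuinely different route from the paper's.

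The paper only uses the energy functional $J$ at the very end, to deduce uniqueness from convexity; for \emph{existence} it never touches coercivity. Instead it builds a Leray--Schauder degree argument: (i) a maximum-principle lemma giving the a priori bounds $u_1<0$, $u_2<0$, $u_1-u_2<\tfrac{N}{N-1}$; (ii) a blow-up lemma showing that if a sequence of solutions with parameters $(\lambda_{1,k},\lambda_{2,k})\to(\lambda_1,\lambda_2)$ becomes unbounded, then $(\lambda_1,\lambda_2)$ must lie on the critical curve $|V|=\tfrac{4\pi n}{N\lambda_1}+\tfrac{4\pi n(N-1)}{N\lambda_2}$; (iii) a reduction in the diagonal case $\lambda_1=\lambda_2$ to a single scalar equation whose solvability is known. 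These three ingredients make the degree $d(\lambda_1,\lambda_2)$ well defined and constant on the path-connected supercritical region, and equal to $1$ at a diagonal point.

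Your approach is more elementary and exploits the variational structure to its full extent: the Jensen lower bound decouples $J$ into the Dirichlet energies of the mean-zero parts $v_i'$ and the two scalar functions $N|V|\E^s-K_1 s$ and $N(N-1)|V|\E^t-(N-1)K_2 t$, and the stated inequality is exactly what makes both scalar pieces coercive. The paper's degree-theoretic route is more robust (it would survive perturbations destroying the variational structure), but for this particular system your direct minimization is shorter and avoids the a priori bounds, the blow-up analysis, and the diagonal special case entirely. One small remark: you take $u_0$ with mean zero, whereas the paper normalizes by $u_0\le 0$; the two differ by a constant and your choice is the natural one for the Jensen step.
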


The paper is organized as follows. In Section 2, we introduce preliminaries. Section 3 is devoted to the proof of Theorem \ref{t1}.

\section{Preliminary results}

For each edge $xy \in E$, we suppose that its weight $w_{xy}>0$ and that $w_{xy}=w_{yx}$. For any function $u: V \to \mathbb{R}$, the Laplacian of $u$ is defined by 
\begin{equation}\label{d1}
	\Delta u(x)=\frac{1}{\mu(x)} \sum_{y \sim x} w_{y x}(u(y)-u(x)),
\end{equation}
where $y \sim x$ means $xy \in E$. The gradient form of $u$ is defined by
\begin{equation}\label{g}
	\Gamma(u, v)(x)=\frac{1}{2 \mu(x)} \sum_{y \sim x} w_{x y}(u(y)-u(x))(v(y)-v(x)).
\end{equation}
Denote the length of the gradient of $u$ by
\begin{equation*}
	|\nabla u|(x)=\sqrt{\Gamma(u,u)(x)}=\left(\frac{1}{2 \mu(x)} \sum_{y \sim x} w_{x y}(u(y)-u(x))^{2}\right)^{1 / 2}.
\end{equation*}
We denote, for any function $
u: V \rightarrow \mathbb{R}
$, an integral of $u$ on $V$ by $\int \limits_{V} u d \mu=\sum\limits_{x \in V} \mu(x) u(x)$. For $p \ge 1$, denote $|| u ||_{p}:=(\int \limits_{V} |u|^{p} d \mu)^{\frac{1}{p}}$. As in \cite{ALY}, we define a sobolev space and a norm by 
\begin{equation*}
	W^{1,2}(V)=\left\{u: V \rightarrow \mathbb{R}: \int \limits_{V} \left(|\nabla u|^{2}+u^{2}\right) d \mu<+\infty\right\},
\end{equation*}
and \begin{equation*}
	\|u\|_{H^{1}(V)}=	\|u\|_{W^{1,2}(V)}=\left(\int \limits_{V}\left(|\nabla u|^{2}+u^{2}\right) d \mu\right)^{1 / 2}.
\end{equation*}

The following Sobolev embedding and Poincaré inequality will be used later in the paper.
\begin{lemma}\label{21}
	{\rm (\cite[Lemma 5]{ALY})} Let $G=(V,E)$ be a finite graph. The sobolev space $W^{1,2}(V)$ is precompact. Namely, if ${u_j}$ is bounded in $W^{1,2}(V)$, then there exists some $u \in W^{1,2}(V)$ such that up to a subsequence, $u_j \to u$ in $W^{1,2}(V)$.
\end{lemma}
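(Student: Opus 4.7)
\emph{Necessity.} I plan to integrate each equation of \eqref{1} over $V$. Since $w_{xy}=w_{yx}$, the standard discrete integration by parts gives $\int_V \Delta u \, d\mu=0$ for every $u\colon V\to \R$. Writing $I_1=\int_V \mathrm{e}^{u_1/N+(N-1)u_2/N}\,d\mu$ and $I_2=\int_V \mathrm{e}^{u_1/N-u_2/N}\,d\mu$, the integrated equations become
\[
I_1+(N-1)I_2 = N|V|-\tfrac{4\pi n}{m_e^2},\qquad I_1-I_2 = -\tfrac{4\pi n}{m_g^2},
\]
so that $I_1 = |V|-\tfrac{4\pi n}{Nm_e^2}-\tfrac{4\pi n(N-1)}{Nm_g^2}$. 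Since $I_1$ is the integral of a strictly positive function, $I_1>0$ is forced, which is exactly the asserted volume condition.

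\emph{Sufficiency.} The plan is to exhibit a minimizer of
\begin{align*}
J(u_1,u_2) &= \int_V\!\left[\tfrac{|\nabla u_1|^2}{2m_e^2}+\tfrac{(N-1)|\nabla u_2|^2}{2m_g^2}-Nu_1+N\mathrm{e}^{u_1/N+(N-1)u_2/N}+N(N-1)\mathrm{e}^{u_1/N-u_2/N}\right]d\mu \\
&\quad + \tfrac{4\pi}{m_e^2}\sum_{j=1}^n u_1(p_j)+\tfrac{4\pi(N-1)}{m_g^2}\sum_{j=1}^n u_2(p_j)
\end{align*}
on $W^{1,2}(V)\times W^{1,2}(V)$. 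A direct differentiation check confirms that the Euler--Lagrange equations of $J$ are the two components of \eqref{1}, each scaled by a positive constant. The functional $J$ is strictly convex: the Dirichlet piece is convex, and the Hessian of the exponential part in a direction $(h_1,h_2)$ equals $\int_V \left[\tfrac{A}{N}(h_1+(N-1)h_2)^2+\tfrac{(N-1)B}{N}(h_1-h_2)^2\right]d\mu$, where $A=\mathrm{e}^{u_1/N+(N-1)u_2/N}$ and $B=\mathrm{e}^{u_1/N-u_2/N}$ are pointwise positive, so it vanishes only at $h_1\equiv h_2\equiv 0$.

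The main obstacle is coercivity. I will decompose $u_i=\bar u_i+v_i$ with $\int_V v_i\,d\mu=0$, so $|\nabla u_i|=|\nabla v_i|$. Jensen's inequality gives $\int_V A\,d\mu\ge |V|\,\mathrm{e}^{\bar u_1/N+(N-1)\bar u_2/N}$ and analogously for $B$. Changing variables to $a=\bar u_1/N+(N-1)\bar u_2/N$ and $b=\bar u_1/N-\bar u_2/N$, the mean-dependent part of $J$ dominates
\[
C_a\,a+C_b\,b+N|V|\,\mathrm{e}^a+N(N-1)|V|\,\mathrm{e}^b,
\]
with $C_a=-N|V|+\tfrac{4\pi n}{m_e^2}+\tfrac{4\pi n(N-1)}{m_g^2}$ and $C_b=(N-1)\bigl[-N|V|+\tfrac{4\pi n}{m_e^2}-\tfrac{4\pi n}{m_g^2}\bigr]$. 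The hypothesis is precisely $C_a<0$, which in turn forces $C_b<0$. With both coefficients negative, the displayed two-variable function is coercive on $\R^2$: as $a$ or $b$ tends to $+\infty$ the corresponding exponential blows up, while when both tend to $-\infty$ the linear part $C_a a+C_b b$ tends to $+\infty$. The remaining terms $\sum_j v_i(p_j)$ are controlled by $\|\nabla v_i\|_{L^2}$ through the Poincar\'e inequality on the mean-zero subspace (all norms on that finite-dimensional space being equivalent), hence absorbed by the quadratic Dirichlet terms.

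A minimizing sequence for $J$ is therefore bounded in $W^{1,2}(V)\times W^{1,2}(V)$; Lemma~\ref{21} (applied coordinatewise) yields a convergent subsequence, and continuity of $J$ ensures the limit is a minimizer. Strict convexity makes the minimizer unique, and it solves the Euler--Lagrange system, which is \eqref{1}; this gives both existence and uniqueness.
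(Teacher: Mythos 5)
Your argument does not address the statement it is attached to. The statement is Lemma \ref{21}: on a finite graph, $W^{1,2}(V)$ is precompact, i.e.\ every sequence bounded in the $W^{1,2}$ norm has a subsequence converging in $W^{1,2}(V)$. What you have written is instead an outline of a proof of Theorem \ref{t1} --- integrating the system \eqref{1} for the necessity of the volume condition, and minimizing a convex functional for sufficiency. Nothing in it concerns compactness of bounded sets in the Sobolev space, so as a proof of Lemma \ref{21} it is entirely off target.

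For the record, the lemma needs almost no work, and the paper does not prove it but cites \cite[Lemma 5]{ALY}: since $V$ is finite, the space of all functions $u\colon V\to\R$ is finite-dimensional, isomorphic to $\R^{|V|}$, and $\|\cdot\|_{W^{1,2}(V)}$ is a genuine norm on it (the $L^2$ part alone already separates points). A bounded sequence in a finite-dimensional normed space has a convergent subsequence by Bolzano--Weierstrass, and the limit automatically lies in $W^{1,2}(V)$ because that space coincides with all of $\R^{|V|}$. Separately, if your intention was to prove Theorem \ref{t1}, be aware that your variational route (direct minimization of a coercive strictly convex functional) is genuinely different from the paper's, which derives a priori bounds (Lemmas \ref{b} and \ref{y}) and runs a Leray--Schauder degree continuation from the diagonal case $\lambda_1=\lambda_2$, invoking convexity of $J$ only for uniqueness; but that comparison belongs to the proof of the theorem, not of this lemma.
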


	\begin{lemma}\label{22}
		{\rm (\cite[Lemma 6]{ALY})}	Let $G = (V, E)$ be a finite graph. For all functions $u : V \to \mathbb{R}$ with $\int \limits_{V} u d\mu = 0$, there 
		exists some constant $C$ depending only on $G$ such that $\int \limits_{V} u^2 d\mu \le C \int \limits_{V} |\nabla u|^2 d\mu$.
	\end{lemma}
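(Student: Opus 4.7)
The plan is to establish the Poincaré inequality via the standard variational formulation of the first nonzero Laplacian eigenvalue. Define
$$\lambda \df \inf\left\{ \int_V |\nabla u|^2 \, \D\mu \,:\, u : V \to \R,\; \int_V u \, \D\mu = 0,\; \int_V u^2 \, \D\mu = 1 \right\}.$$
The inequality asserted in Lemma~\ref{22} is equivalent to $\lambda > 0$, with $C = 1/\lambda$: for any mean-zero $u \not\equiv 0$, normalizing by $\|u\|_2$ places $u/\|u\|_2$ in the admissible class, and homogeneity of both $\int u^2 \, \D\mu$ and $\int |\nabla u|^2 \, \D\mu$ under scalar multiplication yields $\int_V u^2 \, \D\mu \le \lambda^{-1} \int_V |\nabla u|^2 \, \D\mu$; the $u \equiv 0$ case is trivial.

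To show the infimum is both attained and strictly positive, I take a minimizing sequence $\{u_j\}$. By construction $\|u_j\|_2 = 1$ and $\int_V |\nabla u_j|^2 \, \D\mu$ is bounded, so $\{u_j\}$ is bounded in $W^{1,2}(V)$. Lemma~\ref{21} produces a subsequential limit $u_0 \in W^{1,2}(V)$ (since $V$ is finite this amounts essentially to Bolzano--Weierstrass on $\R^{|V|}$), and continuity in $W^{1,2}$ of the three functionals $u \mapsto \int u \, \D\mu$, $u \mapsto \|u\|_2^2$, and $u \mapsto \int |\nabla u|^2 \, \D\mu$ ensures that $u_0$ satisfies $\int_V u_0 \, \D\mu = 0$, $\|u_0\|_2 = 1$, and $\int_V |\nabla u_0|^2 \, \D\mu = \lambda$. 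Now suppose toward contradiction that $\lambda = 0$. Unwinding the definition \eqref{g} of $|\nabla u|^2$ and using $w_{xy} > 0$ on every edge, the identity $\int_V |\nabla u_0|^2 \, \D\mu = 0$ forces $u_0(y) = u_0(x)$ for every edge $xy \in E$. Since $G$ is connected, $u_0$ must be constant on $V$, and then $\int_V u_0 \, \D\mu = 0$ together with $\mu(x) > 0$ for all $x$ forces $u_0 \equiv 0$, contradicting $\|u_0\|_2 = 1$. Hence $\lambda > 0$, and the lemma holds with $C = 1/\lambda$.

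The only step with genuine content is this rigidity implication at the end: vanishing Dirichlet energy forces a function to be constant on any connected graph, so mean-zero plus $\mu > 0$ forces it to vanish. This is the one structural input where connectedness of $G$ is essential and it is the only obstacle worth mentioning; the rest is routine compactness together with continuity of quadratic functionals, which is especially transparent here because $V$ is finite and the problem reduces to finite-dimensional linear algebra on $\R^{|V|}$.
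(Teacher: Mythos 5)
Your proof is correct. The paper itself does not prove this lemma --- it is quoted directly as \cite[Lemma 6]{ALY} --- and the argument in that reference is essentially the one you give: normalize a (minimizing or contradiction-producing) sequence to have mean zero and unit $L^2$ norm, extract a convergent subsequence by the compactness of Lemma \ref{21}, and use connectedness of $G$ together with $w_{xy}>0$ and $\mu>0$ to rule out a nonzero limit with vanishing Dirichlet energy; your eigenvalue-infimum phrasing is just a repackaging of the same compactness-plus-rigidity argument.
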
 

\section{The proof of Theorem \ref{t1}}
 Since $\int\limits_{V} -\frac{4 \pi n}{|V|} + 4 \pi \sum\limits_{j=1}^{n} \delta_{p_{j}}(x) d\mu = 0,$
 the equation
\begin{equation}\label{2}
	\Delta u_{0}=-\frac{4 \pi n}{|V|}+4 \pi \sum_{j=1}^{n} \delta_{p_{j}}(x), \quad x \in V ; \quad u_{0} \leq 0
\end{equation}
admits a solution $u_0$. Let $v_1 =u_1-u_0$, $v_2 =u_2 -u_0$. Then we know $(v_1,v_2)$ satisfies 
\begin{equation}\label{3}
	\begin{aligned}
		&\Delta v_{1}=-N m_{e}^{2}+\frac{4 \pi n}{|V|}+m_{e}^{2}\left(\mathrm{e}^{u_{0}+\frac{v_{1}}{N}+\frac{(N-1)}{N} v_{2}}+[N-1] \mathrm{e}^{\frac{v_{1}}{N}-\frac{v_{2}}{N}}\right), \\
		&\Delta v_{2}=\frac{4 \pi n}{|V|}+m_{g}^{2}\left(\mathrm{e}^{u_{0}+\frac{v_{1}}{N}+\frac{(N-1)}{N} v_{2}}-\mathrm{e}^{\frac{v_{1}}{N}-\frac{v_{2}}{N}} \right).
	\end{aligned}
\end{equation}
Define the energy functional 
\begin{equation}\label{4}
	\begin{aligned}
		J \left(v_{1}, v_{2}\right)=& \int\limits_{V}\left\{\frac{1}{2 m_{e}^{2}}\Gamma(v_1,v_1)+\frac{(N-1)}{2 m_{g}^{2}}\Gamma(v_{2},v_{2})+N \mathrm{e}^{u_{0}+\frac{v_{1}}{N}+\frac{(N-1)}{N} v_{2}}\right.\\
		&\left.+N(N-1) \mathrm{e}^{\frac{v_{1}}{N}-\frac{v_{2}}{N}}-\left(N-\frac{4 \pi n}{m_{e}^{2}|V|}\right) v_{1}+\frac{4 \pi n(N-1)}{m_{g}^{2}|V|} v_{2}\right\} \mathrm{d}\mu.
	\end{aligned}
\end{equation}

We give a necessary condition for the existence of solutions to \eqref{1} by the following lemma.
 \begin{lemma}\label{x}
If \eqref{1} admits a solution, then 
\begin{equation}\label{11}
	N|V|>\frac{4 \pi n}{m_{e}^{2}}+\frac{4 \pi n(N-1)}{m_{g}^{2}}.
\end{equation}
\end{lemma}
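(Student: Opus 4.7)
The plan is to derive \eqref{11} purely from integration identities obtained by testing each equation of \eqref{1} against the constant function $1$. The crucial elementary fact, following from the symmetry $w_{xy}=w_{yx}$ and the definition \eqref{d1}, is that $\int_V \Delta u\, \D\mu = 0$ for every $u:V\to\R$. Thus, if $(u_1,u_2)$ is a solution of \eqref{1}, integrating the two equations over $V$ annihilates the left-hand sides and leaves two scalar identities.

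Introduce the abbreviations
\begin{equation*}
A \df \int_V \E^{\frac{u_1}{N}+\frac{N-1}{N}u_2}\D\mu,\qquad B \df \int_V \E^{\frac{u_1}{N}-\frac{u_2}{N}}\D\mu .
\end{equation*}
Integration of the first equation of \eqref{1} produces
\begin{equation*}
0 = -N m_e^{2}\,|V| + m_e^{2}\bigl(A + (N-1)B\bigr) + 4\pi n ,
\end{equation*}
and integration of the second gives
\begin{equation*}
0 = m_g^{2}(A-B) + 4\pi n .
\end{equation*}
This is a non-degenerate linear system in $(A,B)$; solving it explicitly yields
\begin{equation*}
A = |V| - \frac{4\pi n}{N m_e^{2}} - \frac{4\pi n(N-1)}{N m_g^{2}},\qquad B = A + \frac{4\pi n}{m_g^{2}} .
\end{equation*}

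Since the integrand defining $A$ is strictly positive at every vertex and $\mu(x)>0$, the quantity $A$ must be strictly positive. This positivity is exactly the inequality
\begin{equation*}
|V| > \frac{4\pi n}{N m_e^{2}} + \frac{4\pi n(N-1)}{N m_g^{2}} ,
\end{equation*}
which is \eqref{11} after multiplying by $N$; note that $B>0$ is then automatic. There is no real obstacle here: the argument reduces to the global averaging identity for the graph Laplacian combined with positivity of an exponential, so the work is entirely in setting up and solving the $2\times 2$ linear system for $(A,B)$.
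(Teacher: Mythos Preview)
Your proof is correct and follows essentially the same approach as the paper: integrate each equation over $V$, use $\int_V \Delta u\,\D\mu=0$, solve the resulting $2\times 2$ linear system for the two exponential integrals, and invoke strict positivity of the first one. The only cosmetic difference is that the paper first passes to the shifted variables $v_i=u_i-u_0$ (with $u_0$ solving \eqref{2}) and integrates \eqref{3} instead of \eqref{1}, but since $\frac{u_1}{N}+\frac{N-1}{N}u_2=u_0+\frac{v_1}{N}+\frac{N-1}{N}v_2$ and $\frac{u_1-u_2}{N}=\frac{v_1-v_2}{N}$, your quantities $A,B$ coincide with the paper's and the computations are identical.
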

\begin{proof}
Integering \eqref{3}, we deduce that
\begin{equation}
	\begin{aligned}
		& \int\limits_{V}\left(\mathrm{e}^{u_{0}+\frac{v_{1}}{N}+\frac{(N-1)}{N} v_{2}}+[N-1] \mathrm{e}^{\frac{v_{1}}{N}-\frac{v_{2}}{N}}\right) \mathrm{d} \mu=N|V|-\frac{4 \pi n}{m_{e}^{2}}, \\
		& \int\limits_{V}\left(\mathrm{e}^{u_{0}+\frac{v_{1}}{N}+\frac{(N-1)}{N} v_{2}}-\mathrm{e}^{\frac{v_{1}}{N}-\frac{v_{2}}{N}}\right) \mathrm{d} \mu=-\frac{4 \pi n}{m_{g}^{2}},
	\end{aligned}
\end{equation}
which is equivalent to 
\begin{equation}\label{89}
	\begin{gathered}
		N \int_{V} \mathrm{e}^{u_{0}+\frac{v_{1}}{N}+\frac{(N-1)}{N} v_{2}} \mathrm{~d} \mu=N|V|-\frac{4 \pi n}{m_{e}^{2}}-\frac{4 \pi n(N-1)}{m_{g}^{2}}, \\
		N \int_{V} \mathrm{e}^{\frac{v_{1}}{N}-\frac{v_{2}}{N}} \mathrm{~d} \mu=\left(N|V|-\frac{4 \pi n}{m_{e}^{2}}\right)+\frac{4 \pi n}{m_{g}^{2}}.
	\end{gathered}
\end{equation}
Then the desired conclusion follows.

We now complete the proof.
\end{proof}

Next, we give a priori bounds for a solution to \eqref{1}.

\begin{lemma}\label{b}
Suppose that $(v,w)$ is a solution of \eqref{1}. Then we have $v<0$, $w<0$ and $v-w< \frac{N}{N-1}$.
\end{lemma}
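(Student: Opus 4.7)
The plan is to obtain each of the three bounds by applying the discrete maximum principle on the connected finite graph $G$, combined with an AM--GM inequality applied to the two exponential source terms
\[
A := \mathrm{e}^{\frac{v}{N}+\frac{N-1}{N}w}, \qquad B := \mathrm{e}^{\frac{v-w}{N}},
\]
which satisfy the useful identities $A B^{N-1} = \mathrm{e}^{v}$ and $A = B\mathrm{e}^{w}$.

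To prove $v<0$ I would let $x_0$ maximize $v$ on $V$. If $x_0 = p_j$ for some $j$, the positive contribution $4\pi/\mu(p_j)$ from the delta source together with $\Delta v(x_0)\le 0$ and the AM--GM inequality $A + (N-1)B \ge N\mathrm{e}^{v/N}$ forces $v(x_0)<0$. If $x_0\notin\{p_1,\ldots,p_n\}$, the same chain of inequalities yields $v(x_0)\le 0$; in the boundary case $v(x_0)=0$, equality in AM--GM forces $A(x_0)=B(x_0)=1$, hence $w(x_0)=0$, and $\Delta v(x_0)=0$. The discrete strong maximum principle then propagates these vanishing identities to every neighbour of $x_0$, and an induction exploiting connectedness of $V$ eventually reaches a vertex $p_j$, producing the contradiction $4\pi/\mu(p_j)\le 0$. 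The same template, applied to the rewriting
\[
\Delta w = m_g^{2}B\bigl(\mathrm{e}^{w}-1\bigr) + 4\pi\sum_{j=1}^{n}\delta_{p_j},
\]
gives $w<0$.

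For the third bound the key observation is that the two delta sources cancel upon subtracting the equations in \eqref{1}, leaving the pointwise identity
\[
\Delta(v-w) = (m_e^{2}-m_g^{2})\,A + \bigl((N-1)m_e^{2}+m_g^{2}\bigr)B - N m_e^{2}
\]
on all of $V$. Letting $x_0$ maximize $v-w$ (equivalently $B$), the inequality $\Delta(v-w)(x_0)\le 0$ together with the substitution $A = B\mathrm{e}^{w}$ produces
\[
B(x_0)\bigl[(m_e^{2}-m_g^{2})\mathrm{e}^{w(x_0)} + (N-1)m_e^{2} + m_g^{2}\bigr] \le N m_e^{2}.
\]
A short case split on the sign of $m_e^{2}-m_g^{2}$, combined with $w(x_0)<0$ from the previous step and the assumption $m_g^{2}>0$, shows that the bracketed expression strictly exceeds $(N-1)m_e^{2}$, whence $B(x_0) < N/(N-1)$. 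Since $x_0$ is the maximum, $\mathrm{e}^{(v-w)/N} < N/(N-1)$ holds everywhere on $V$, and the elementary estimate $\ln(1+t)\le t$ with $t=1/(N-1)$ converts this to $v-w < N/(N-1)$.

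The main obstacle is the borderline case $v(x_0)=0$ (respectively $w(x_0)=0$) at an interior maximum $x_0\notin\{p_j\}$ in the first two parts: one must verify that equality in AM--GM simultaneously forces $v$ and $w$ to vanish at $x_0$, and then combine the discrete strong maximum principle with the connectedness of $V$ to propagate these vanishing identities until a vortex point $p_j$ is reached, where the positive delta term finally produces a contradiction. The remaining computations reduce to routine maximum-principle bookkeeping.
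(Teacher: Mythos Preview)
Your proposal is correct and follows essentially the same maximum-principle strategy as the paper. The main difference is in the ordering: the paper first establishes $w<0$ from the identity $A-B=B(\mathrm{e}^{w}-1)$, and then uses the strict inequality $w<0$ to show that $\mathrm{e}^{\frac{N-1}{N}w}+(N-1)\mathrm{e}^{-\frac{w}{N}}>N$, which makes the source term in the $v$-equation strictly positive at any point with $v\ge 0$; this immediately yields $v<0$ at a maximum without treating the borderline case $v(x_0)=0$ separately. By proving $v<0$ first via AM--GM, you instead rely on the strong-maximum-principle propagation to a vortex point to rule out $v(x_0)=0$; this works, but the paper's ordering bypasses that extra step for $v$. (Conversely, the paper is brief on the borderline $w(x_0)=0$ at a non-vortex point, where your propagation argument is what actually closes the gap.) For the third bound the two arguments coincide; your case split on the sign of $m_e^{2}-m_g^{2}$ is unnecessary, since rewriting the bracket as $m_e^{2}\mathrm{e}^{w}+(N-1)m_e^{2}+m_g^{2}(1-\mathrm{e}^{w})>(N-1)m_e^{2}$ holds uniformly once $w<0$ and $m_e^{2},m_g^{2}>0$.
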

\begin{proof}
Let $M:=\max\limits_{V} w =w(x_0)$. We claim that $M<0$. Otherwise, $w(x_0)\ge 0$. Thus, we have 
\begin{equation}
	\Delta w (x_0)=m_{g}^{2}\left(\mathrm{e}^{\frac{v}{N}+\frac{(N-1)}{N} w}-\mathrm{e}^{\frac{v}{N}-\frac{w}{N}}\right)+4 \pi \sum_{j=1}^{n} \delta_{p_{j}}(x) \bigg{|}_{x=x_0}>0.
\end{equation}
On the other hand, by \eqref{d1}, we obtain 
\begin{equation}
	\Delta w (x_0)\le 0.
\end{equation}
This is impossible. Thus, we have 
\begin{equation}
	w(x)< 0
\end{equation}
for all $x\in V$. 

Next, we show that $M_1 :=\max\limits_{x \in V} v=v(x_1)<0$. Suppose by way of contradiction that $M_1 \ge 0$. Let \begin{equation*}
	F(t):=e^{\frac{N-1}{N} t}+(N-1)e^{-\frac{t}{N}}.
\end{equation*} Then it is easy to check that 
\begin{equation*}
	F^{'}(t):= \frac{N-1}{N}e^{\frac{-t}{N} }(e^{t}  -1).
\end{equation*}
Thus we have 
\begin{equation*}
	F(t)>F(0)=N,~t<0.
\end{equation*}
It follows that 
\begin{equation*}
	e^{\frac{N-1}{N} t}+(N-1)e^{-\frac{t}{N}} >N,~t<0.
\end{equation*}
Thus, we have 
\begin{equation*}
	\Delta v(x_1)=-Nm_{e}^{2}+m_{e}^{2}(e^{\frac{v}{N}+\frac{N-1}{N}w} +(N-1)e^{\frac{v-w}{N}})+4\pi\sum\limits_{j=1}^{n}\delta_{p_{j}}(x)>0.
\end{equation*}
By \eqref{d1}, we see that $0\ge \Delta v(x_1)$, this a contradiction.
Thus we obtain $v<0$ for all $x\in V$. 

Now, we show that $M_3:=\max\limits_{x \in V} (v-w)=(v-w) (y_0) < N ln \frac{N}{N-1} $. Assume the assertion is false, then we deduce that 
\begin{equation}
	\begin{aligned}
		\Delta\left(\frac{v}{N}-\frac{w}{N}\right) (y_0)&=\left(\frac{m_{e}^{2}}{N}-\frac{m_{g}^{2}}{N}\right) \mathrm{e}^{\frac{v}{N}+\frac{N-1}{N} w}+\left(\frac{N-1}{N} m_{e}^{2}+\frac{m_{g}^{2}}{N}\right) \mathrm{e}^{\frac{v}{N}-\frac{w}{N}}-m_{e}^{2}\bigg|_{y=y_0}	\\
		&> \frac{N-1}{N} m_{e}^{2} \mathrm{e}^{\frac{v-w}{N}}-m_{e}^{2} \bigg{|}_{y=y_0} \\
		&\ge 0 .
	\end{aligned}
\end{equation}
By \eqref{d1}, we have 
\begin{equation}
	0\ge \Delta\left(\frac{v}{N}-\frac{w}{N}\right) (y_0).
\end{equation}
This is impossible. Thus we have 
\begin{equation}
	v-w < N \ln \frac{N}{N-1} \le \frac{N}{N-1} 
\end{equation}
for all $x\in V$.
\end{proof}

Let $\lambda_1 = m_{e}^{2}$, $\lambda_2 = m_{g}^{2}$, $v=v_1$ and $w=v_2$ in \eqref{3}. Then we have 
\begin{equation}\label{51}
\Delta v=\lambda_{1}\left(\mathrm{e}^{u_{0}} \mathrm{e}^{\frac{v}{N}+\frac{N-1}{N} w}+(N-1) \mathrm{e}^{\frac{v-w}{N}}-N\right)+\frac{4 \pi n}{|V|}, 
\end{equation}	
\begin{equation}\label{52}
\Delta w=\lambda_{2}\left(\mathrm{e}^{u_{0}} \mathrm{e}^{\frac{v}{N}+\frac{N-1}{N} w}-\mathrm{e}^{\frac{v-w}{N}}\right)+\frac{4 \pi n}{|V|}.
\end{equation}
 In order to prove Lemma \ref{y}, we need the following lemma.
 \begin{lemma}\label{EP}
 	Suppose that $u$ satisfies $\Delta u= f$ and $\int_{V} u d\mu =0$. Then we there exists $\hat{C}>0$ such that $$\max\limits_{x \in V} |u(x)| \le \hat{C}||f||_{L^{2}(V)}.$$
 \end{lemma}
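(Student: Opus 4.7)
The plan is to run a Dirichlet energy argument on $V$ and then cash the resulting $L^2$ bound in for an $L^\infty$ bound via the finiteness of the graph. The first step is to pair the equation $\Delta u = f$ with $u$ itself and integrate over $V$. Using the symmetry $w_{xy}=w_{yx}$ together with the definitions \eqref{d1} and \eqref{g}, a direct exchange of summation variables yields the summation-by-parts identity
$$\int_V (\Delta u)\, u\, d\mu = -\int_V \Gamma(u,u)\, d\mu = -\int_V |\nabla u|^2\, d\mu.$$
Substituting $\Delta u = f$ and applying Cauchy-Schwarz then gives
$$\int_V |\nabla u|^2\, d\mu = -\int_V f u\, d\mu \le \|f\|_{L^2(V)}\,\|u\|_{L^2(V)}.$$

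The second step uses the zero-mean hypothesis. Since $\int_V u\, d\mu = 0$, Lemma \ref{22} produces a constant $C = C(G) > 0$ with $\|u\|_{L^2(V)}^2 \le C\int_V |\nabla u|^2\, d\mu$. Chaining this with the previous display and dividing through by $\|u\|_{L^2(V)}$ (the case $u \equiv 0$ being trivial) yields $\|u\|_{L^2(V)} \le C\,\|f\|_{L^2(V)}$.

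The final step converts the $L^2$ bound into a pointwise bound using that $V$ is finite and $\mu$ is strictly positive. Set $\mu_{\min} = \min_{x \in V} \mu(x) > 0$; then for every $x_0 \in V$,
$$\mu_{\min}\, |u(x_0)|^2 \le \mu(x_0)\, |u(x_0)|^2 \le \sum_{x \in V} \mu(x)\, |u(x)|^2 = \|u\|_{L^2(V)}^2,$$
so $\max_{x\in V}|u(x)| \le \mu_{\min}^{-1/2}\|u\|_{L^2(V)} \le \hat{C}\,\|f\|_{L^2(V)}$ with $\hat{C} = C\,\mu_{\min}^{-1/2}$.

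I do not expect any genuine obstacle in this argument: the only point that requires a brief verification is the summation-by-parts identity, and the remaining chain of Poincaré inequality plus the trivial equivalence of $L^p$ norms on a finite discrete space is essentially textbook. The zero-mean assumption enters exactly once, to unlock Lemma \ref{22}; without it the constant would need to absorb the mean value of $u$, which is not controlled by $\|f\|_{L^2(V)}$ alone.
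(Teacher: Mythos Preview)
Your argument is correct and follows essentially the same route as the paper: integrate $\Delta u=f$ against $u$, combine with the Poincar\'e inequality (Lemma~\ref{22}) to obtain $\|u\|_{L^2}\le C\|f\|_{L^2}$, and then pass to $L^\infty$ via finiteness of $V$. The only cosmetic difference is that the paper uses Young's inequality with a parameter $\epsilon$ in place of your direct Cauchy--Schwarz step, and leaves the $L^2\to L^\infty$ conversion implicit where you spell out the $\mu_{\min}$ factor.
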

\begin{proof}
	From $\Delta u= f$, we deduce that 
	\begin{equation}
		\int_{V} \Gamma(u,u)d \mu=-\int\limits_{x \in V} fu d\mu.
	\end{equation}
By Cauchy inequality with $\epsilon(\epsilon>0)$ and Lemma \ref{22}, there exists $C>0$ such that 
\begin{equation}\label{w1}
		\int_{V} \Gamma(u,u)d \mu \le \frac{1}{4\epsilon} \int_{V} f^{2} d\mu + \epsilon C	\int_{V} \Gamma(u,u)d \mu. 
\end{equation}
Taking $\epsilon=\frac{1}{2C}$ in \eqref{w1}, we have 
\begin{equation}
	\int_{V} \Gamma(u,u)d \mu \le  C \int_{V} f^{2} d\mu .
\end{equation}
Applying Lemma \ref{22}, we know that 
\begin{equation}
	||u||_{L^{2}(V)} \le C||f||_{L^{2}(V)}.
\end{equation}
 Then we deduce that there exists constant $\bar{C}>0$ such that 
\begin{equation}
|u(x)| \le \bar{C} ||f||_{L^{2}(V)}	
\end{equation}
for all $x\in V$.

We now complete the proof.
\end{proof}

To show that Theorem \ref{t1}, we need the following Lemma.
\begin{lemma}\label{y}
	Let $\lambda_{1}=m_{e}^{2}$ and $\lambda_{2}=m_{g}^{2}$. Set $\{ (v_k,w_k)\}$ be a sequence of solutions to equations \eqref{51}-\eqref{52} with $\lambda_{1}=\lambda_{1,k}$ and $\lambda_{2}=\lambda_{2,k}$. Assume that $\lambda_{1,k}\to \lambda_{1}$, $\lambda_{2,k}\to \lambda_{2}$ and \begin{equation}
		\sup \left\{\left|v_{k}(x)\right|+\left|w_{k}(x)\right| \mid x \in V \right\} \rightarrow \infty
	\end{equation} 
as $k\to +\infty$. Then $\lambda_{1}$ and $\lambda_{2}$ satisfy 
\begin{equation}
	|V|=\frac{4 \pi n}{N \lambda_{1}}+\frac{4 \pi n (N-1)}{N \lambda_{2}}.
\end{equation}
\end{lemma}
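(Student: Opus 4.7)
The plan is to show that, under the blow-up hypothesis, $(v_k,w_k)$ must blow up via its averages $\bar v_k \df \frac{1}{\abs{V}}\int_V v_k\,\D\mu$ and $\bar w_k \df \frac{1}{\abs{V}}\int_V w_k\,\D\mu$, and then to pass to the limit in the integrated version \eqref{89} of the system. Writing $v_k = \bar v_k + \tilde v_k$ and $w_k = \bar w_k + \tilde w_k$ with $\int_V \tilde v_k\,\D\mu = \int_V \tilde w_k\,\D\mu = 0$, the first step is to apply Lemma \ref{b} to the solution $(v_k+u_0,\,w_k+u_0)$ of \eqref{1} (with parameters $\lambda_{1,k},\lambda_{2,k}$) to obtain the uniform pointwise bounds $v_k + u_0 < 0$, $w_k + u_0 < 0$, and $v_k - w_k < N\ln\frac{N}{N-1}$. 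Together with $\lambda_{i,k}\to\lambda_i$ and the fixedness of $u_0$, these bound the right-hand sides of \eqref{51}--\eqref{52} uniformly in $L^\infty(V)$, hence in $L^2(V)$. Applying Lemma \ref{EP} to the mean-zero functions $\tilde v_k,\tilde w_k$ (which satisfy $\Delta \tilde v_k = \Delta v_k$ and $\Delta \tilde w_k = \Delta w_k$) then yields a constant $C$ with $\max_V\abs{\tilde v_k} + \max_V\abs{\tilde w_k} \le C$ uniformly in $k$.

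Since the fluctuations are uniformly bounded, the blow-up hypothesis forces $\abs{\bar v_k} + \abs{\bar w_k} \to \infty$. The pointwise upper bounds on $v_k,w_k$ also give upper bounds on the averages, so up to a subsequence $\bar v_k \to v_\infty$ and $\bar w_k \to w_\infty$ with $v_\infty,w_\infty \in [-\infty,\infty)$ and at least one equal to $-\infty$. I then split into two cases according to whether $\bar w_k \to -\infty$.

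If $\bar w_k \to -\infty$, averaging $v_k - w_k < N\ln\frac{N}{N-1}$ forces $\bar v_k \le \bar w_k + N\ln\frac{N}{N-1} \to -\infty$, so $\frac{v_k}{N} + \frac{N-1}{N}w_k \to -\infty$ uniformly on $V$; passing to the limit in the first identity of \eqref{89} (written for the $k$-th solution with $m_e^2, m_g^2$ replaced by $\lambda_{1,k},\lambda_{2,k}$) then yields $\abs{V} = \frac{4\pi n}{N\lambda_1} + \frac{4\pi n(N-1)}{N\lambda_2}$, as desired. Otherwise $\bar w_k$ stays bounded along a subsequence while $\bar v_k \to -\infty$; then both $\frac{v_k}{N} + \frac{N-1}{N}w_k$ and $\frac{v_k - w_k}{N}$ tend uniformly to $-\infty$, both integrals in \eqref{89} vanish in the limit, and one obtains simultaneously $\abs{V} = \frac{4\pi n}{N\lambda_1} + \frac{4\pi n(N-1)}{N\lambda_2}$ and $\abs{V} = \frac{4\pi n}{N\lambda_1} - \frac{4\pi n}{N\lambda_2}$; subtracting gives $\frac{4\pi n}{\lambda_2} = 0$, a contradiction. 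Hence only the first case can occur, and the lemma follows.

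The main obstacle is the reduction from $L^\infty$ blow-up to blow-up of the averages, which requires combining the pointwise a priori estimates of Lemma \ref{b} with the elliptic bound of Lemma \ref{EP}. The subsequent case analysis is the subtler point: ruling out the asymmetric scenario where $\bar v_k$ alone tends to $-\infty$ depends essentially on both equations of the system through \eqref{89}, since only then do the two mutually incompatible limiting identities appear to force the contradiction.
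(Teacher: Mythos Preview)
Your proof is correct and shares with the paper the same first step: use Lemma~\ref{b} to bound the right-hand sides of \eqref{51}--\eqref{52} uniformly, apply Lemma~\ref{EP} to the mean-zero parts $\tilde v_k,\tilde w_k$, and conclude that blow-up can only occur through the averages $\bar v_k,\bar w_k$, both of which are bounded above.

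The case analysis, however, is organized differently. The paper first forces $\bar v_k\to-\infty$ and then splits on whether $\bar v_k-\bar w_k$ stays bounded. In the unbounded subcase it passes to a limiting Poisson equation $\Delta v=-N\lambda_1+4\pi n/\abs{V}$, whose solvability gives $N\lambda_1\abs{V}=4\pi n$, and this is contradicted via Lemma~\ref{x}. In the bounded subcase it extracts a limiting system for $(v,W,\sigma)$ and integrates to reach the conclusion. Your argument instead splits on whether $\bar w_k\to-\infty$ and works entirely with the integrated identities \eqref{89}: in the first case the exponent $\tfrac{v_k}{N}+\tfrac{N-1}{N}w_k\to-\infty$ and the first line of \eqref{89} gives the result directly; in the second case both exponents tend to $-\infty$ and the two lines of \eqref{89} become mutually inconsistent. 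Your route avoids any passage to limiting PDEs and the appeal to Lemma~\ref{x}, making it shorter and more self-contained; the paper's route, on the other hand, produces the explicit limiting profiles, which is extra structural information even if not needed for the statement.
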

\begin{proof}
	Denote 
	\begin{equation}\label{54}
			\Delta v_{k}= \lambda_{1, k}\left(\mathrm{e}^{u_{0}} \mathrm{e}^{\frac{v_{k}(x)}{N}+\frac{N-1}{N} w_{k}(x)}+(N-1) \mathrm{e}^{\frac{v_{k}-w_{k}}{N}}-N\right)+\frac{4 \pi n}{|V|}:=f_{k},		
	\end{equation}
\begin{equation}\label{55}
	\Delta w_{k}= \lambda_{2, k}\left(\mathrm{e}^{u_{0}} \mathrm{e}^{\frac{v_{k}}{N}+\frac{N-1}{N} w_{k}}-\mathrm{e}^{\frac{v_{k}-w_{k}}{N}}\right)+\frac{4 \pi n}{|V|}:=g_{k}.
\end{equation}
Denote $\bar{v}_{k}:=\int\limits_{V} v_{k} d \mu$ and $\bar{w}_{k}:= \int\limits_{V} w_{k} d \mu$. Since $\int\limits_{V} v_{k}-\bar{v}_{k}=0$, by Lemma \ref{EP} and Lemma \ref{b}, we deduce that there exists $C_{N}>0$ so that 
\begin{equation}\label{54,}
	\max\limits_{V}(|v_{k}-\bar{v}_{k}|)\le C_{1} ||f_{k}||_{L^{2}(V)} \le C_{N}
\end{equation}
and
\begin{equation}\label{55,}
\max(|w_{k}-\bar{w}_{k}|)\le C_{2} ||g_{k}||_{L^{2}(V)} \le C_{N}.	
\end{equation}
Suppose $\sup\limits_{V} \left\{\left|v_{k}(x)\right| \mid x \in V \right\} \rightarrow \infty$. Since $v_{k}+u_{0}<0$, we deduce that 
$$\bar{v}_{k}\le - \int\limits_{V} u_{0} d \mu.$$ From \eqref{54,}, we deduce that $v_{k}(x)\to -\infty$ and $\bar{v}_{k}\to -\infty$ uniformly on $V$ as $k\to +\infty$. From Lemma \ref{b}, we see that $$\bar{v}_{k}-\bar{w}_{k} \le \frac{N}{N-1} |V|.$$ 

Suppose that 
$$
\liminf _{k \rightarrow \infty}\left(\bar{v}_{k}-\bar{w}_{k} \right)=-\infty.
$$

Subject to passing a subsequence, we have 
$$
\lim _{k \rightarrow \infty}\left(\bar{v}_{k}- \bar{w}_{k}\right)=-\infty.
$$
From \eqref{54,} and \eqref{55,}, we deduce that $$v_{k}(x)-w_{k}(x)\to -\infty~\text{uniformly~on~}V\text{as}~k\to +\infty.$$
It follows that $f_{k} \to -N \lambda_{1} + \frac{4\pi n}{|V|}$. It follows from \eqref{54,} that, by passing to a subsequence, $v_{k}-\bar{v}_{k} \to v(\text{say})$. Letting $k\to +\infty$ in $\Delta (v_{k}-\bar{v}_{k})=f_{k}$. Then we have $\Delta v= -N \lambda_{1} + \frac{4\pi n}{|V|}$ on $V$. This implies that $$N\lambda_{1}|V|=4\pi n.$$ 
By Lemma \ref{x}, we deduce that 
\begin{equation}\label{010}
	N|V|>\frac{4 \pi n}{ \lambda_{1, k}}+\frac{4 \pi (N-1) n}{ \lambda_{2, k}},
\end{equation}
and hence that $|V|>\frac{4\pi n}{\lambda_{1} N}.$ This is impossible. Thus $\{\bar{v}_{k}-\bar{w}_{k} \}$ is bounded. Therefore, $\bar{w}_{k}\to -\infty$ as $k\to \infty$. By \eqref{55,}, we see that $$w_{k}\to -\infty~ \text{as} ~k\to \infty.$$ By passing to a subsequence, we have 
\begin{equation}
	v_{k}-\bar{v}_{k} \rightarrow v,~  w_{k}-\bar{w}_{k} \rightarrow W \text { and } \bar{v}_{k}-\bar{w}_{k} \rightarrow \sigma .
\end{equation}
uniformly~for~ $x\in V $ as~$k\to \infty$. Thus, we deduce that 
\begin{equation}
	\begin{aligned}
		&\Delta v=\lambda_{1}\left((N-1) \mathrm{e}^{\frac{v-W+\sigma}{N}}-N\right)+\frac{4 \pi n}{|V|}, \\
		&\Delta W=\frac{4 \pi n}{|V|}-\lambda_{2} \mathrm{e}^{\frac{v-W+\sigma}{N}},
	\end{aligned}
\end{equation}
and hence that 
\begin{equation}
	\begin{aligned}
		\int_{V} \mathrm{e}^{\frac{v-W+\sigma}{N}} \mathrm{~d} \mu &=\frac{N|V|}{N-1}-\frac{4 \pi n}{\lambda_{1}(N-1)}, \\
		\int_{V} \mathrm{e}^{\frac{v-W+\sigma}{N}} \mathrm{~d} \mu &=\frac{4 \pi n}{\lambda_{2}} .
	\end{aligned}
\end{equation}
Therefore, we conclude that
\begin{equation}\label{11,}
	|V|=\frac{4 \pi n}{N \lambda_{1}}+\frac{4 \pi (N-1) n}{ N\lambda_{2}}.
\end{equation} 

We now complete the proof.
\end{proof}

We will give the proof of Theorem \ref{t1} by applying Lemma \ref{y} and the following Lemma.
\begin{lemma}\label{35}
	Assume that $\lambda_{1}=\lambda_{2}$. Then equations $\eqref{51}-\eqref{52}$ admits a unique solution if and only if $|V|>\frac{4\pi n}{\lambda_{1}}$. 
\end{lemma}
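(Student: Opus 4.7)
My plan is to treat necessity and sufficiency separately; sufficiency is by minimising the functional $J$ from~\eqref{4} with $m_e^2=m_g^2=\lambda_1$. Necessity is immediate from Lemma~\ref{x}: substituting $m_e^2=m_g^2=\lambda_1$ in~\eqref{11} reduces it to $N|V|>\tfrac{4\pi n}{\lambda_1}+\tfrac{4\pi n(N-1)}{\lambda_1}=\tfrac{4\pi nN}{\lambda_1}$, i.e.\ $|V|>\tfrac{4\pi n}{\lambda_1}$. A direct Euler--Lagrange computation (identical in form to the derivation of system~\eqref{3}) shows that critical points of $J$ satisfy \eqref{51}--\eqref{52}, so producing a minimiser of $J$ yields a solution.

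For sufficiency I would show that $J$ is continuous, coercive, and strictly convex on $W^{1,2}(V)\times W^{1,2}(V)$. Decompose $v_i=\bar v_i+v_i'$ with $\int_V v_i'\,\D\mu=0$; the quadratic part of $J$ controls $\|v_i'\|_{W^{1,2}}$ by way of the Poincar\'e inequality (Lemma~\ref{22}). Applying Jensen's inequality to the two exponential integrals yields
\begin{align*}
\int_V e^{u_0+\frac{v_1}{N}+\frac{N-1}{N}v_2}\,\D\mu &\ge |V|\,e^{\bar u_0+\frac{\bar v_1}{N}+\frac{N-1}{N}\bar v_2},\\
\int_V e^{\frac{v_1-v_2}{N}}\,\D\mu &\ge |V|\,e^{\frac{\bar v_1-\bar v_2}{N}},
\end{align*}
where $\bar u_0=\tfrac{1}{|V|}\int_V u_0\,\D\mu$. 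Setting $\sigma=\tfrac{\bar v_1+(N-1)\bar v_2}{N}$ and $\tau=\tfrac{\bar v_1-\bar v_2}{N}$, a direct calculation rewrites the linear part of $J$ as $-N\bigl(1-\tfrac{4\pi n}{\lambda_1|V|}\bigr)|V|\sigma-N(N-1)|V|\tau$; the cancellation producing the clean coefficient $-N(N-1)|V|$ in front of $\tau$ uses $\lambda_1=\lambda_2$ essentially. Under the hypothesis $|V|>\tfrac{4\pi n}{\lambda_1}$ both $N\bigl(1-\tfrac{4\pi n}{\lambda_1|V|}\bigr)$ and $N(N-1)$ are strictly positive, and since each one-variable function $A\,e^x-B\,x$ with $A,B>0$ is coercive on $\R$, this gives coercivity of $J$ on $W^{1,2}(V)\times W^{1,2}(V)$. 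Strict convexity of $J$ follows from strict convexity of $e^{f_1}$ and $e^{f_2}$ in the linear forms $f_1=\tfrac{v_1+(N-1)v_2}{N}$ and $f_2=\tfrac{v_1-v_2}{N}$, together with the fact that $(v_1,v_2)\mapsto(f_1,f_2)$ is a linear bijection.

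With coercivity, continuity, and strict convexity in place, any minimising sequence for $J$ is bounded in $W^{1,2}(V)\times W^{1,2}(V)$; by the precompactness in Lemma~\ref{21} it admits a strongly convergent subsequence whose limit is the unique minimiser of $J$, hence the unique solution of \eqref{51}--\eqref{52}. The delicate point I expect is the algebra behind the $\tau$-coefficient: the single threshold $|V|>\tfrac{4\pi n}{\lambda_1}$ suffices precisely because, when $\lambda_1=\lambda_2$, all $\lambda_1$-dependence cancels out of the coefficient of $\tau$, so positivity of both linear coefficients is captured by a single inequality rather than two; outside of the $\lambda_1=\lambda_2$ regime, the more refined blow-up argument of Lemma~\ref{y} is needed to push through Theorem~\ref{t1}.
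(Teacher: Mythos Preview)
Your argument is correct but takes a genuinely different route from the paper. The paper does not minimise $J$ at all for this lemma; instead it subtracts \eqref{52} from \eqref{51} and, using $\lambda_1=\lambda_2$, applies the mean value theorem to obtain $\Delta(v-w)=\lambda_1 e^{\xi}(v-w)$, then invokes the maximum principle twice to force $v\equiv w$. The system thereby collapses to the scalar Bogomol'nyi equation $\Delta v=\lambda_1(e^{u_0+v}-1)+4\pi n/|V|$, and the paper cites \cite{Hub} for the if-and-only-if on that single equation. The paper's route is shorter, reveals the structural fact that every solution in the diagonal regime satisfies $v\equiv w$, and offloads the analytic work to an external reference; your direct variational argument is self-contained and, more interestingly, is not actually special to $\lambda_1=\lambda_2$. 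Redoing your computation with general $\lambda_1,\lambda_2$ gives $\sigma$-coefficient $-(N|V|-\tfrac{4\pi n}{\lambda_1}-\tfrac{4\pi n(N-1)}{\lambda_2})$ and $\tau$-coefficient $-(N-1)\bigl[N|V|-\tfrac{4\pi n}{\lambda_1}+\tfrac{4\pi n}{\lambda_2}\bigr]$, both negative under the hypothesis of Theorem~\ref{t1}; so your method in fact yields Theorem~\ref{t1} directly, bypassing Lemma~\ref{y} and the Leray--Schauder degree entirely. Your closing remark that the blow-up lemma is ``needed'' outside the diagonal regime undersells your own argument.
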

\begin{proof}
Suppose $(v,w)$ is a solution to equations \eqref{51}-\eqref{52}. Due to $\lambda_{1}=\lambda_{2}>0$,  by mean value Theorem, we deduce that there exists $\xi$ such that 
	\begin{equation}
		\Delta (v-w)= \lambda_{1} e^{\xi}(v-w).
	\end{equation} 
Let $M:=\max\limits_{V} (v-w)=(v-w)(x_{0})$. We claim that $M\le 0$. Otherwise, $M>0$. Then $\Delta (v-w)(x_0)=\lambda_{1} e^{\xi} (v-w) \bigg{|}_{x=x_{0}}>0$. By \eqref{d1}, we see that 
\begin{equation*}
	0\ge \Delta(v-w)(x_0).
\end{equation*} 
This is a contradiction. Thus we have $v\le w$ on $V$. By a similar argument as above, we deduce that $v\ge w$ on $V$. Therefore, we conclude that $v\equiv w$ on $V$. Thus, $v$ satisfies 
\begin{equation}\label{58}
	\Delta v=\lambda_{1}(e^{u_0+v}-1) + \frac{4\pi n}{|V|}.
\end{equation}
It follows from \cite{Hub} that \eqref{58} admits a unique solution if and only if $|V|>\frac{4\pi n}{\lambda_{1}}$.
\end{proof}

\begin{proof}[Proof of Theorem \ref{t1}.]	
Define 
$$\bar{H}^{1}(V):=\{ u\in H^{1}(V)| \bar{u}:=\int\limits_{ V} u d \mu =0 \}$$
and
$X:=\bar{H}^{1}(V) \times \bar{H}^{1}(V).$ Let
\begin{equation}
	\begin{aligned}
		&\int\limits_{V} f(x, v(x)+a, w(x)+b) \mathrm{d} x=0 ,\\
		&\int\limits_{V} g(x, v(x)+a, w(x)+b) \mathrm{d} x=0,
	\end{aligned}
\end{equation}
where 
\begin{equation}
	\begin{aligned}
		&f(x, v, w)=\lambda_{1}\left(\mathrm{e}^{u_{0}(x)} \mathrm{e}^{\frac{v}{N}+\frac{N-1}{N} w}+(N-1) \mathrm{e}^{\frac{v-w}{N}}-N\right)+\frac{4 \pi n}{|V|} ,\\
		&g(x, v, w)=\lambda_{2}\left(\mathrm{e}^{u_{0}(x)} \mathrm{e}^{\frac{v}{N}+\frac{N-1}{N} w}-\mathrm{e}^{\frac{v-w}{N}}\right)+\frac{4 \pi n}{|V|}.
	\end{aligned}
\end{equation}
Denote $A=\int\limits_{V} e^{u_{0}+\frac{v}{N}+\frac{N-1}{N} w} d \mu$, $B=\int\limits_{V} e^{\frac{v-w}{N}} d \mu$ and $C=-\frac{N|V|}{4 \pi n} \lambda_{2}+\frac{\lambda_{2}}{\lambda_{1}}$. Then there exists a unique pair 
$$b=b(v,w)=ln \frac{BC+(N-1)B}{A(C-1)},$$
 $$a=a(v,w)=\frac{1}{N} \ln \frac{B C+(N-1) B}{A(C-1)}+\ln \frac{\lambda_{1} N|V|-4 \pi n}{\left(  \frac{BC+(N-1)B}{A(C-1)} A+(N-1) B \right)  \lambda_{1}  } $$
such that 
$$
\begin{aligned}
	&\int_{\Omega} f(x, v(x)+a, w(x)+b) \mathrm{d} x=0, \\
	&\int_{\Omega} g(x, v(x)+a, w(x)+b) \mathrm{d} x=0.
\end{aligned}
$$
For any $(v,w)\in X$, define $$(Q,W):=T(v,w)\in X,$$ where $(Q,W)\in X$ is the unique solution to the equations
\begin{equation*}
	\begin{aligned}
		&\Delta Q=f(x, v+a, w+b), \\
		&\Delta W=g(x, v+a, w+b).
	\end{aligned}
\end{equation*}
By a similar argument as Lemma \ref{EP}, we know that $T$ is completely continuous. Furthermore, by Lemma \ref{y}, there exists $M>0$ such that 
\begin{equation}
	||Q||_{H^{1}(V)}+||W||_{H^{1}(V)}\le M.
\end{equation}
Thus, we may define the Leray-Schauder degree $d(\lambda_{1}, \lambda_{2})$ for $T$. From Lemma \ref{35}, there exists a sufficiently large $\lambda_{0} >0$ so that $d(\lambda_{0}, \lambda_{0})=1.$ In view of \begin{equation*}
	\left\{\left(\lambda_{1}, \lambda_{2}\right)\bigg{|}| V \mid>\frac{4 \pi n}{N \lambda_{1}}+\frac{4 \pi n(N-1)}{N \lambda_{2}}\right\}
\end{equation*} is path-connected. We see that $d(\lambda_{1},\lambda_{2})=d(\lambda_0,\lambda_0)=1$. Therefore, \eqref{51}-\eqref{52} admits at least one solution.
It is easy to check that $J$ defined by \eqref{4} is convex in $H^{1}(V)$. Thus the solution of \eqref{1} is unique.

We now complete the proof.
\end{proof}


\end{document}